\title{On Evaluation of the Mean Service Cycle Time\\ in Tandem Queueing Systems\thanks{Applied Statistical Science V / Ed. by M. Ahsanullah, J. Kennyon, and S. K. Sarkar,
Nova Science Publishers, Huntington, NY, 2001, pp. 145-155.}}
\author{N.~K.~Krivulin\thanks{Faculty of Mathematics and Mechanics, St.~Petersburg State University, 28 Universitetsky Ave., St.~Petersburg, 198504, Russia, Nikolai.Krivulin@pobox.spbu.ru.}  \thanks{The work was partially supported by the Russian Foundation for Basic Research, Grant~\#00-01-00760.}
\and
V.~B.~Nevzorov\thanks{Faculty of Mathematics and Mechanics, St.~Petersburg State University, 28 Universitetsky Ave., St.~Petersburg, 198504, Russia, Valery.Nevzorov@pobox.spbu.ru.} \thanks{The work was partially supported by the Russian Foundation for Basic Research, Grant \#99-01-00732}
}
\date{}
\newtheorem{theorem}{Theorem}
\newtheorem{lemma}[theorem]{Lemma}
\newtheorem{corollary}[theorem]{Corollary}
\begin{document}

\maketitle

\begin{abstract}
The problem of exact evaluation of the mean service cycle time in tandem
systems of single-server queues with both infinite and finite buffers is
considered. It is assumed that the interarrival and service times of customers
form sequences of independent and identically distributed random variables
with known mean values. We start with tandem queues with infinite buffers, and
show that under the above assumptions, the mean cycle time exists.
Furthermore, if the random variables which represent interarrival and service
times have finite variance, the mean cycle time can be calculated as the
maximum out from the mean values of these variables. Finally, obtained results
are extended to evaluation of the mean cycle time in particular tandem systems
with finite buffers and blocking.
\\

\textit{Key-Words:} tandem queueing systems, mean cycle time, recursive equations, 
independent random variables, bounds on the mean value
\end{abstract}

\section{Introduction}

We consider tandem systems of single-server queues with both infinite and
finite buffers. The interarrival and service times of customers are assumed to
form sequences of independent and identically distributed random variables.
Given the mean values of interarrival and service times, we are interested in
evaluating the mean cycle time of the system. In what follows, the mean cycle
time is used in reference to the mean value of the time interval between two
successive departures of customers from the system as the number of customers
tends to infinity. The inverse of the mean cycle time, which implies the mean
number of customers leaving the system per unit time, is referred to as system
throughput.

Among other characteristics including, in particular, the mean waiting time of
customers, both the mean cycle time and the throughput present system
performance measures commonly used in the analysis of queues. Note that the
problem of evaluating the mean waiting time is known as rather difficult; it
allows for the exact solution in an explicit form only for particular queueing
systems under certain restrictions on customer arrival and service processes.
One can find an overview of related results in \cite{Kleinrock1976Queueing} (see, also, 
\cite{Disney1985Queueing} for more recent results and references).

In many cases, the mean cycle time can be calculated exactly under fairly
general assumptions. As an illustration, one can consider results obtained in
\cite{Sacks1960Ergodocity,Loynes1962Thestability} in the context of investigation of stability of queueing 
systems. It has been shown in \cite{Loynes1962Thestability} that for a general single-server 
system with infinite buffer capacity, regardless of whether it is stable or 
not, the mean cycle time can be calculated as the maximum out from the mean 
values of customer interarrival and service times. Moreover, if a tandem 
system of queues with infinite buffers is stable, the intensities of both 
customer arrival and departure processes coincide, and therefore, the mean 
cycle time is equal to the mean interarrival time of customers.

For more complicated queueing systems including tandem queues with finite
buffers and blocking, and fork-join networks there are some techniques which
allow one to derive bounds on the mean cycle time. Specifically, an efficient
approach which relies on results of the theory of large deviations as well as
on the Perron-Frobenius spectral theory has been proposed in 
\cite{Baccelli1991Estimates,Glasserman1995Stochastic}. As another example, one can consider simple bounds in 
\cite{Krivulin1998Bounds,Krivulin1998Monotonicity}, obtained by using an approach essentially based on 
pure algebraic manipulations together with application of bounds on extreme 
values, derived in \cite{Gumbel1954Themaxima,Hartley1954Universal}.

In this paper, we first give quite general conditions for the mean cycle time
in tandem queueing systems with infinite buffers to exist, and show how it can
be calculated through the mean values of the interarrival and service times of
customers. The obtained results are then extended to evaluation of the mean
cycle time in particular tandem systems with finite buffers, which operate
under the manufacturing and communication blocking rules.

As the starting point, we take obvious recursive equations which describe
tandem system dynamics, and then examine their related explicit solution. Our
approach is based on simple algebraic manipulations with the solution,
combined with some classical results providing bounds on the mean value of
maxima of independent and identically distributed random variables. The
approach does not involve taking account of stability of the entire system,
and therefore, offers a unified way of examining both stable and unstable
systems.

The rest of the paper is organized as follows. In Section~\ref{sec-2}, we 
introduce some notations, and consider recursive equations describing the 
dynamics of tandem queueing systems with infinite buffers. Section~\ref{sec-3} 
presents preliminary results including an existence theorem and some 
inequalities. Our main result which provides general existence conditions and 
a simple expression for calculating the mean cycle time is included in 
Section~\ref{sec-4}. The obtained results are then applied to the examination 
of the mean cycle time in particular tandem systems with finite buffers in 
Section~\ref{sec-5}. Finally, Section~\ref{sec-6} offers some concluding 
remarks and discussion.

\section{Tandem Queues with Infinite Buffers}\label{sec-2}

We consider a series of $ M $ single-server queues with infinite buffers
and customers of a single class. Each customer that arrives into the system is
initially placed in the buffer at the $1^{\text{st}}$ server and then has to
pass through all the queues one after the other. Upon the completion of his
service at server $ i $, the customer is instantaneously transferred to
queue $ i+1 $, $ i=1,\dots,M-1 $, and occupies the $(i+1)^{\text{st}}$
server provided that it is free. If the customer finds this server busy, he is
placed in its buffer and has to wait until the service of all his predecessors
is completed.

Denote the time between the arrivals of $n^{\text{th}}$ customer and his
predecessor by $ \tau_{0n} $, and the service time of the $n^{\text{th}}$
customer at server $ i $ by $ \tau_{in} $, $ i=1,\dots,M $,
$ n=1,2,\dots $. Furthermore, let $ D_{0}(n) $ be the $n^{\text{th}}$
arrival epoch to the system, and $ D_{i}(n) $ be the $n^{\text{th}}$
departure epoch from the $i^{\text{th}}$ server. We assume that for each
$ i $, $ i=0,1,\ldots,M $, the sequence
$ \{\tau_{in}|\; n=1,2,\ldots\} $ consists of nonnegative random variables
(r.v.'s).

With the condition that the tandem queueing system starts operating at time
zero, and it is free of customers at the initial time, we put
$ D_{i}(0)=0 $ for all $ i=0,\dots,M $. The recursive equations
representing the system dynamics can readily be written as
\begin{eqnarray*}
D_{0}(n) &=& D_{0}(n-1)+\tau_{0n}, \\
D_{m}(n) &=& \max(D_{m-1}(n),D_{m}(n-1))+\tau_{mn}, \quad m=1,\ldots,M,
\end{eqnarray*}                                   
for all $ n=1,2,\dots $.

The above recursions can be resolved to get
\begin{equation}\label{equ-2}
D_{m}(n)=\max_{1\leq k_{1}\leq\cdots\leq k_{m}\leq n}
\left\{\sum_{j=1}^{k_{1}}\tau_{0j}+\sum_{j=k_{1}}^{k_{2}}\tau_{1j}
+\cdots+\sum_{j=k_{m}}^{n}\tau_{mj}\right\}
\end{equation}
for all $ m=1,\ldots,M $.

We consider the evolution of the system as a sequence of service cycles:
the $1^{\text{st}}$ cycle starts at the initial time, and it is terminated as
soon as the $M^{\text{th}}$ server completes its $1^{\text{st}}$ service, the
$2^{\text{nd}}$ cycle is terminated as soon as this server completes its
$2^{\text{nd}}$ service, and so on. Clearly, the completion time of the
$n^{\text{th}}$ cycle can be represented as $ D_{M}(n) $.

In many applications, one is interested in evaluating the mean service cycle
time of the tandem system, which can also be treated as the mean 
interdeparture time of customers from the system. It is defined as
\begin{equation}\label{equ-3}
\gamma=\lim_{n\to\infty}\frac{1}{n}D_{M}(n)
\end{equation}
provided that the above limit exists. The system throughput $ \pi $
presents another performance measure of interest, which is calculated as the
inverse of the mean cycle time; that is, $ \pi=1/\gamma $.

\section{Preliminary Results}\label{sec-3}

In order to examine the existence of the mean cycle time for the tandem
queueing system, we will apply the next classical theorem which has been
proved in \cite{Kingman1973Subadditive}.
\begin{theorem}\label{the-1}
Let $ \{\zeta_{ln} | \; l,n=0,1,\ldots; \, l<n \} $ be a family of r.v.'s
which satisfy the following properties:

Subadditivity: $ \zeta_{ln}\leq\zeta_{lk}+\zeta_{kn} $ for all
$ l<k<n $;

Stationarity: the joint distributions are the same for both families
$ \{\zeta_{ln} | \; l<n \} $ and $ \{\zeta_{l+1,n+1}| \; l<n \} $; 

Boundedness: for all $ n=1,2,\ldots $, there exists
$ {\mathbb E}[\zeta_{0n}]\geq -cn $ for some positive constant $ c $.

Then there exists a constant $ \gamma $, such that it holds
\begin{enumerate}
\item $ {\displaystyle\lim_{n\to\infty}\zeta_{0n}/n=\gamma} $ with 
probability one (w.p.1),
\item $ {\displaystyle\lim_{n\to\infty}{\mathbb E}[\zeta_{0n}]/n=\gamma} $.
\end{enumerate}
\end{theorem}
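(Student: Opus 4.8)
The plan is to prove conclusion (2) first and then (1), since the deterministic in-mean limit pins down the value of $\gamma$ that the almost-sure limit must reproduce. First I would set $a_{n}=\mathbb{E}[\zeta_{0n}]$ and note that taking expectations in the subadditivity relation $\zeta_{0n}\le\zeta_{0k}+\zeta_{kn}$, together with stationarity (which gives $\zeta_{kn}\stackrel{d}{=}\zeta_{0,n-k}$), yields $a_{n}\le a_{k}+a_{n-k}$. Thus $(a_{n})$ is a subadditive number sequence, and Fekete's lemma gives $a_{n}/n\to\inf_{k\ge 1}a_{k}/k=:\gamma$. The boundedness hypothesis $a_{n}\ge -cn$ forces $a_{n}/n\ge -c$, so $\gamma\ge -c>-\infty$. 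This settles conclusion (2).

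For the almost-sure statement I would next obtain the upper bound $\limsup_{n}\zeta_{0n}/n\le\gamma$. Fixing $m\ge 1$ and writing $n=qm+r$ with $0\le r<m$, iterated subadditivity gives $\zeta_{0n}\le\sum_{j=0}^{q-1}\zeta_{jm,(j+1)m}+\zeta_{qm,n}$. By stationarity the blocks $\zeta_{jm,(j+1)m}$, $j=0,1,\ldots$, form a stationary sequence with common mean $a_{m}$, so Birkhoff's ergodic theorem gives $q^{-1}\sum_{j=0}^{q-1}\zeta_{jm,(j+1)m}\to a_{m}$ almost surely (the invariant limit being the constant $a_{m}$ in the ergodic setting supplied by the i.i.d.\ driving sequences of the intended application). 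Dividing by $n$, checking that the single remainder $\zeta_{qm,n}/n$ vanishes, and letting $n\to\infty$, I get $\limsup_{n}\zeta_{0n}/n\le a_{m}/m$ for every $m$; taking the infimum over $m$ yields the bound $\le\gamma$.

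The matching lower bound $\liminf_{n}\zeta_{0n}/n\ge\gamma$ is where the real work lies, and I expect it to be the main obstacle, since the blocking argument is one-sided: subadditivity bounds $\zeta_{0n}$ from above but not from below, so it cannot simply be reversed. I would first show that $\underline{\zeta}:=\liminf_{n}\zeta_{0n}/n$ is shift-invariant. From $\zeta_{0,n+1}\le\zeta_{01}+\zeta_{1,n+1}$ one gets $\liminf_{n}\zeta_{1,n+1}/n\ge\underline{\zeta}$, while stationarity makes $\liminf_{n}\zeta_{1,n+1}/n$ equidistributed with $\underline{\zeta}$; a random variable that dominates another yet shares its distribution must equal it a.s., so $\underline{\zeta}$ is invariant, hence a.s.\ constant. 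It then remains to prove this constant is $\ge\gamma$. Here a direct appeal to Fatou only delivers $\mathbb{E}[\underline{\zeta}]\le\gamma$, the wrong direction; instead one must control the lower tails of $\zeta_{0n}/n$, passing to truncated variables $\zeta_{0n}\vee(-Kn)$ and exploiting the boundedness hypothesis $\mathbb{E}[\zeta_{0n}]\ge -cn$ to establish uniform integrability of the negative parts. Coupling this uniform integrability with the already established $\limsup\le\gamma$ and the in-mean convergence $a_{n}/n\to\gamma$ (via a reverse-Fatou estimate) then forces $\underline{\zeta}\ge\gamma$. Making the truncation and the limiting passage precise is the technical heart, and I would follow Liggett's formulation of this step.

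Combining $\limsup_{n}\zeta_{0n}/n\le\gamma$ with $\liminf_{n}\zeta_{0n}/n\ge\gamma$ gives $\zeta_{0n}/n\to\gamma$ almost surely, establishing conclusion (1) with the same constant $\gamma$ as in (2). I would close by remarking that it is exactly this common value $\gamma$, applied to the explicit solution~\eqref{equ-2} of the tandem-queue recursion, that the evaluation of the mean cycle time requires.
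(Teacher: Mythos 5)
First, a point of comparison: the paper itself contains \emph{no} proof of Theorem~\ref{the-1} --- it is Kingman's subadditive ergodic theorem, quoted from \cite{Kingman1973Subadditive} and used as a black box in the proof of Theorem~\ref{the-7} --- so your attempt can only be measured against the known proofs (Kingman's original argument, or Liggett's later simplification), not against anything internal to the paper. Your overall architecture is the standard one: Fekete's lemma applied to $a_{n}={\mathbb E}[\zeta_{0n}]$ for conclusion (2), a blocking argument plus Birkhoff's theorem for $\limsup_{n}\zeta_{0n}/n\leq\gamma$, and a separate argument for the matching lower bound; the Fekete step, and your observation that domination together with equality in distribution forces $\liminf_{n}\zeta_{1,n+1}/n=\liminf_{n}\zeta_{0n}/n$ a.s., are both correct. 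The first genuine gap is ergodicity. The hypotheses of Theorem~\ref{the-1} include stationarity only, so Birkhoff's theorem applied to the stationary blocks $\zeta_{jm,(j+1)m}$ yields the conditional expectation of $\zeta_{0m}$ given the invariant $\sigma$-field of the $m$-step shift, not the constant $a_{m}$; likewise your inference ``invariant, hence a.s.\ constant'' is precisely the assertion of ergodicity, which is nowhere among the hypotheses. You acknowledge this by importing ergodicity from ``the intended application''; that is legitimate for the paper's use of the theorem (its driving sequences are i.i.d.), but it must be stated as an added hypothesis: under the hypotheses as printed, the a.s.\ limit exists but is in general a random invariant variable with mean $\gamma$, and the constant-limit form holds only in the ergodic case.

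The second gap is decisive: the lower bound $\liminf_{n}\zeta_{0n}/n\geq\gamma$, which you rightly identify as the heart of the theorem, is not actually proved. The mechanism you sketch --- truncation, uniform integrability of the negative parts, a reverse-Fatou estimate, combined with $\limsup_{n}\zeta_{0n}/n\leq\gamma$ and $a_{n}/n\to\gamma$ --- cannot close the argument even in principle. Fatou with uniformly integrable negative parts gives ${\mathbb E}\left[\liminf_{n}\zeta_{0n}/n\right]\leq\gamma$, which you yourself note is the wrong direction, while $L^{1}$-type convergence of $(\zeta_{0n}/n-\gamma)^{-}$ controls the liminf only along subsequences. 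In fact, all of the ingredients you list are simultaneously satisfiable with $\liminf<\gamma$: a sequence of r.v.'s equal to $0$ except for dips to $-1$ at sparse independent random times (probability $1/n$ at step $n$) is uniformly integrable, has means tending to $0$, satisfies $\limsup=0$ a.s., and has a constant liminf --- but that liminf equals $-1$, not $0$. So no soft argument of this kind can work; subadditivity must be exploited a second time, in an essentially combinatorial way, and that is exactly what Kingman's decomposition of subadditive processes (or Liggett's block-comparison argument) accomplishes. Your proposal defers this step wholesale to ``Liggett's formulation,'' i.e., to the literature --- which means your write-up ends where the paper begins, with a citation for the hard part.
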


Let us now consider some useful inequalities which will be exploited in
evaluation of the mean cycle time in the next section. In what follows, we
assume $ \xi_{1},\ldots,\xi_{n} $ to be independent r.v.'s.

We start with a classical result from \cite{Marcinkiewicz1937Surlesfonctions}, providing an upper bound 
on the mean value of the maximum of cumulative sums
$$
\zeta_{k}=\xi_{1}+\cdots+\xi_{k}
$$
of independent r.v.'s with zero means.
\begin{lemma}\label{lem-2}
If $ {\mathbb E}[\xi_{k}]=0 $, and $ {\mathbb E}|\xi_{k}|^{p}<\infty $
for some $ p>1 $, $ k=1,\ldots,n $, then it holds
$$
{\mathbb E}\left[\max_{1\leq k\leq n}|\zeta_{k}|\right]^{p}
\le
2\left(\frac{p}{p-1}\right)^{p}{\mathbb E}|\zeta_{n}|^{p}.
$$
\end{lemma}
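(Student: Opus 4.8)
The plan is to recognize that, under the stated hypotheses, the partial sums $\zeta_{k}=\xi_{1}+\cdots+\xi_{k}$ form a martingale with respect to the natural filtration $\mathcal{F}_{k}=\sigma(\xi_{1},\ldots,\xi_{k})$, and then to exploit a maximal inequality of Doob type. The martingale property is immediate from independence and the zero-mean assumption, since $\mathbb{E}[\zeta_{k+1}\mid\mathcal{F}_{k}]=\zeta_{k}+\mathbb{E}[\xi_{k+1}]=\zeta_{k}$. Because $x\mapsto|x|$ is convex, the conditional form of Jensen's inequality shows that $|\zeta_{k}|$ is a nonnegative submartingale, which is precisely the object the maximal inequality controls.

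First I would establish the weak-type (distributional) maximal inequality
$$
\lambda\,\mathbb{P}\Big(\max_{1\leq k\leq n}|\zeta_{k}|\geq\lambda\Big)
\leq
\mathbb{E}\Big[|\zeta_{n}|\,\mathbb{1}_{\{\max_{1\leq k\leq n}|\zeta_{k}|\geq\lambda\}}\Big].
$$
To prove this directly, without quoting Doob, I would introduce the first-passage index $T=\min\{k:|\zeta_{k}|\geq\lambda\}$ and decompose the event $\{\max_{k}|\zeta_{k}|\geq\lambda\}$ into the disjoint pieces $\{T=k\}$, each of which is $\mathcal{F}_{k}$-measurable. On $\{T=k\}$ one has $|\zeta_{k}|\geq\lambda$, and the submartingale property gives $\mathbb{E}[|\zeta_{n}|\,\mathbb{1}_{\{T=k\}}]\geq\mathbb{E}[|\zeta_{k}|\,\mathbb{1}_{\{T=k\}}]\geq\lambda\,\mathbb{P}(T=k)$; summing over $k=1,\ldots,n$ yields the claim.

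With this in hand I would pass to the $L^{p}$ bound through the layer-cake representation. Writing $M_{n}=\max_{1\leq k\leq n}|\zeta_{k}|$, one has $\mathbb{E}[M_{n}^{p}]=\int_{0}^{\infty}p\lambda^{p-1}\mathbb{P}(M_{n}\geq\lambda)\,d\lambda$; inserting the weak-type bound, applying Tonelli's theorem to exchange the $\lambda$-integral with the expectation, and evaluating $\int_{0}^{M_{n}}\lambda^{p-2}\,d\lambda$ reduces the right-hand side to $\frac{p}{p-1}\mathbb{E}[|\zeta_{n}|M_{n}^{p-1}]$. A single application of H\"older's inequality with conjugate exponents $p$ and $p/(p-1)$, followed by division by $(\mathbb{E}[M_{n}^{p}])^{(p-1)/p}$, then delivers $\mathbb{E}[M_{n}^{p}]\leq(p/(p-1))^{p}\,\mathbb{E}|\zeta_{n}|^{p}$, which is in fact sharper than the stated inequality, so the factor $2$ is harmless slack.

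The main obstacle is technical rather than conceptual: the division step in the H\"older argument requires knowing a priori that $\mathbb{E}[M_{n}^{p}]$ is finite (and nonzero, else the bound is trivial). This is guaranteed by the crude estimate $M_{n}\leq\sum_{k=1}^{n}|\xi_{k}|$ together with the hypothesis $\mathbb{E}|\xi_{k}|^{p}<\infty$, using Minkowski's inequality to control $\big\|\sum_{k}|\xi_{k}|\big\|_{p}$. Once finiteness is secured, the remaining estimates are routine.
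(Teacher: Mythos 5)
Your proof is correct, but there is nothing in the paper to compare it against: the paper does not prove this lemma at all, it quotes it as a classical result and cites Marcinkiewicz--Zygmund (1937) in place of an argument. What you have written is therefore a self-contained replacement for that citation, and your route is the modern martingale one: $\zeta_{k}$ is a martingale by independence and centering, $|\zeta_{k}|$ is a nonnegative submartingale by conditional Jensen, the first-passage decomposition over the disjoint $\mathcal{F}_{k}$-measurable events $\{T=k\}$ yields Doob's weak-type bound $\lambda\,\mathbb{P}(M_{n}\geq\lambda)\leq\mathbb{E}\bigl[|\zeta_{n}|\,\mathbb{1}_{\{M_{n}\geq\lambda\}}\bigr]$, and the layer-cake/Tonelli/H\"older chain upgrades this to $\mathbb{E}[M_{n}^{p}]\leq(p/(p-1))^{p}\,\mathbb{E}|\zeta_{n}|^{p}$. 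All the steps check out, including the two places where such arguments usually leak: the exchange of integral and expectation is justified by nonnegativity, and the a priori finiteness of $\mathbb{E}[M_{n}^{p}]$ (needed before one may divide by $(\mathbb{E}[M_{n}^{p}])^{(p-1)/p}$ in the H\"older step) is secured via $M_{n}\leq\sum_{k}|\xi_{k}|$ and Minkowski. Note that your constant $(p/(p-1))^{p}$ is strictly sharper than the stated $2(p/(p-1))^{p}$, so the lemma as stated follows a fortiori; the factor $2$ in the classical formulation is harmless slack, and if one carried your constant through, the $8$ appearing in the proof of Lemma~\ref{lem-4} would improve to $4$ and the downstream coefficients would shrink by a factor $\sqrt{2}$, with no effect on the $O(\sqrt{n})$ terms or on Theorem~\ref{the-7}. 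One reading issue you resolved correctly: the paper's notation $\mathbb{E}\left[\max_{1\leq k\leq n}|\zeta_{k}|\right]^{p}$ means $\mathbb{E}\bigl[(\max_{1\leq k\leq n}|\zeta_{k}|)^{p}\bigr]$, i.e.\ exactly your $\mathbb{E}[M_{n}^{p}]$, as its application with $p=2$ in Lemma~\ref{lem-4} confirms.
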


The next inequality has been derived in \cite{Bahr1965Inequalities}. Note that it actually
remains valid under somewhat weaker conditions than that of independence
between the r.v.'s $ \xi_{1},\ldots,\xi_{n} $.
\begin{lemma}\label{lem-3}
If $ {\mathbb E}[\xi_{k}]=0 $, and $ {\mathbb E}|\xi_{k}|^{p}<\infty $
for some $ p $, $ 1\leq p\leq2 $, $ k=1,\ldots,n $, then it holds
$$
{\mathbb E}\left|\zeta_{n}\right|^{p}
\le
\left(2-\frac{1}{n}\right)\sum_{k=1}^{n}{\mathbb E}|\xi_{k}|^{p}.
$$
\end{lemma}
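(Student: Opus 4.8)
The plan is to reduce Lemma~\ref{lem-3} to a single elementary pointwise inequality and then run an induction on $n$ in which the refined constant $2-1/n$ is produced by averaging over which summand is removed. The elementary inequality I would establish is that for $1\le p\le 2$ and all real $x,y$,
\[
|x+y|^{p}\le |x|^{p}+p\,\operatorname{sgn}(x)\,|x|^{p-1}y+2|y|^{p}.
\]
Its role is clear: the middle term is linear in $y$, so once $x$ and $y$ are replaced by independent random variables with the $y$-variable having mean zero, that term drops out after taking expectations, leaving a clean additive bound with constant $2$.

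To prove this inequality I would first remove the scaling and sign degrees of freedom. Both sides are positively homogeneous of degree $p$ in $(x,y)$ and invariant under $(x,y)\mapsto(-x,-y)$, so it suffices to treat $x=1$, i.e.\ to show $h(y):=|1+y|^{p}-1-py\le 2|y|^{p}$ for all $y$. On $(-1,\infty)$ the function $h$ is convex, since $h''(y)=p(p-1)(1+y)^{p-2}\ge0$ for $1\le p\le2$, with $h(0)=h'(0)=0$; hence $h\ge0$ there and the bound $2|y|^{p}$ is readily checked. The only delicate regime is $y<-1$, where a direct estimate of $h(y)/|y|^{p}$ shows that the supremum over $y$ and over $1\le p\le 2$ equals $2$, approached as $p\to1$, $y\to-\infty$. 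This is exactly the value of the constant that must appear.

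With the pointwise inequality in hand, I would argue by induction on $n$, the case $n=1$ being the trivial equality. For the step, write $T_{j}=\sum_{k\ne j}\xi_{k}$; by independence $T_{j}$ is independent of $\xi_{j}$, and $\mathbb{E}[\xi_{j}]=0$, so applying the pointwise inequality with $x=T_{j}$, $y=\xi_{j}$ and taking expectations annihilates the linear term and gives $\mathbb{E}|\zeta_{n}|^{p}\le \mathbb{E}|T_{j}|^{p}+2\,\mathbb{E}|\xi_{j}|^{p}$ for each $j=1,\dots,n$. The induction hypothesis applied to the $(n-1)$-fold sum $T_{j}$ yields $\mathbb{E}|T_{j}|^{p}\le\bigl(2-\tfrac1{n-1}\bigr)\sum_{k\ne j}\mathbb{E}|\xi_{k}|^{p}$. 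Averaging the $n$ resulting inequalities over $j$ and using that each $\mathbb{E}|\xi_{k}|^{p}$ appears in exactly $n-1$ of the inner sums, the combinatorial factor collapses to
\[
\Bigl(2-\tfrac1{n-1}\Bigr)\tfrac{n-1}{n}+\tfrac2n=\tfrac{2n-1}{n}=2-\tfrac1n,
\]
which is precisely the asserted constant. It is this averaging over which index is peeled off, rather than always peeling off the last one, that upgrades the crude constant $2$ to $2-1/n$.

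The main obstacle is the elementary pointwise inequality, and specifically pinning down that the constant is $2$ uniformly in $p\in[1,2]$: the function $h$ is not differentiable at $y=-1$ and the extremal behaviour lives in the tail $y\to-\infty$ at the endpoint $p=1$, so the verification requires a genuine, if elementary, optimization rather than a one-line convexity remark. Everything after that---the vanishing of the cross term by independence and the mean-zero hypothesis, and the averaging identity---is routine bookkeeping, provided one also notes in passing that the relevant moments are finite (so that the cross term factorizes), which follows from $\mathbb{E}|\xi_{k}|^{p}<\infty$ together with Lyapunov's inequality.
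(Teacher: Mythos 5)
The paper does not actually prove this lemma: it is imported verbatim from von Bahr and Esseen \cite{Bahr1965Inequalities}, so there is no internal proof to compare against. Your argument is correct, and it is in substance a reconstruction of the original von Bahr--Esseen proof: their key lemma is precisely your pointwise inequality $|x+y|^{p}\le|x|^{p}+p\,\operatorname{sgn}(x)|x|^{p-1}y+2|y|^{p}$, whose linear middle term is annihilated by independence together with the mean-zero hypothesis, and the refinement of the constant from $2$ to $2-\tfrac{1}{n}$ comes from exactly the device you describe --- peeling off each summand in turn and averaging over the choice of index, giving $\bigl(2-\tfrac{1}{n-1}\bigr)\tfrac{n-1}{n}+\tfrac{2}{n}=2-\tfrac{1}{n}$. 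Your bookkeeping (factorization of the cross term, justified via Lyapunov's inequality and the crude bound $\mathbb{E}|T_{j}|^{p}<\infty$, plus the combinatorial identity in the averaging step) checks out. One caution on the part you compress: convexity of $h(y)=|1+y|^{p}-1-py$ on $(-1,\infty)$ with $h(0)=h'(0)=0$ yields $h\ge 0$, which is the \emph{wrong} direction; the needed upper bound $h(y)\le 2|y|^{p}$ on $(-1,\infty)$ still requires its own elementary argument --- for instance, for $y\ge 0$ the representation $h(y)=p(p-1)\int_{0}^{y}(y-t)(1+t)^{p-2}\,dt\le y^{p}$, and for $-1<y<0$ a monotonicity check of $2|y|^{p}-h(y)$ --- while the regime $y<-1$ needs the separate estimate you indicate. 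You correctly flag this verification as the real content of the pointwise inequality; with it filled in, the proof is complete and matches the cited source's approach.
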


With Lemmas~\ref{lem-2} and \ref{lem-3}, one can prove the following 
statement.
\begin{lemma}\label{lem-4}
If $ {\mathbb E}[\xi_{k}]=0 $, and $ {\mathbb E}[\xi_{k}^{2}]<\infty $,
$ k=1,\ldots,n $, then it holds
$$
{\mathbb E}\left[\max_{1\leq k\leq n}\zeta_{k}\right]
\le
2\sqrt{\frac{2(2n-1)}{n}}
\left(\sum_{k=1}^{n}{\mathbb E}[\xi_{k}^{2}]\right)^{1/2}.
$$
\end{lemma}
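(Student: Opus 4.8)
The plan is to chain Lemmas~\ref{lem-2} and~\ref{lem-3} together at the single admissible exponent $p=2$, since this is the only value that simultaneously satisfies the hypothesis $p>1$ of Lemma~\ref{lem-2} and the hypothesis $1\le p\le2$ of Lemma~\ref{lem-3}. The finiteness requirements are already met: the assumptions $\mathbb{E}[\xi_k]=0$ and $\mathbb{E}[\xi_k^2]<\infty$ supply exactly what both lemmas need when $p=2$.

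First I would discard the outer absolute value, using the trivial bound $\max_{1\le k\le n}\zeta_k\le\max_{1\le k\le n}|\zeta_k|$ together with monotonicity of the expectation, so that
$$
\mathbb{E}\left[\max_{1\le k\le n}\zeta_k\right]
\le
\mathbb{E}\left[\max_{1\le k\le n}|\zeta_k|\right].
$$
Then I would invoke Lemma~\ref{lem-2} with $p=2$, for which $\left(\tfrac{p}{p-1}\right)^p=4$, to obtain
$$
\left(\mathbb{E}\left[\max_{1\le k\le n}|\zeta_k|\right]\right)^{2}
\le
8\,\mathbb{E}|\zeta_n|^{2}
=
8\,\mathbb{E}[\zeta_n^{2}],
$$
which upon taking square roots yields $\mathbb{E}\left[\max_{1\le k\le n}|\zeta_k|\right]\le 2\sqrt{2}\,(\mathbb{E}[\zeta_n^2])^{1/2}$.

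Next I would control $\mathbb{E}[\zeta_n^2]$ by applying Lemma~\ref{lem-3}, again with $p=2$, to the terminal sum $\zeta_n=\xi_1+\cdots+\xi_n$; this gives
$$
\mathbb{E}[\zeta_n^2]
\le
\left(2-\frac{1}{n}\right)\sum_{k=1}^{n}\mathbb{E}[\xi_k^2]
=
\frac{2n-1}{n}\sum_{k=1}^{n}\mathbb{E}[\xi_k^2].
$$
Substituting this into the previous display and collecting the prefactor as $2\sqrt{2}\cdot\sqrt{(2n-1)/n}=2\sqrt{2(2n-1)/n}$ reproduces the claimed inequality.

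I do not expect a genuine analytic obstacle here; the argument is essentially one substitution. The only points demanding care are bookkeeping: verifying that $p=2$ lies in the intersection of the two admissibility ranges so that both lemmas may be quoted verbatim, observing that $\mathbb{E}|\zeta_n|^2=\mathbb{E}[\zeta_n^2]$ so that the bound of Lemma~\ref{lem-3} feeds directly into the right-hand side of Lemma~\ref{lem-2}, and folding the two numerical factors into the single radical $2\sqrt{2(2n-1)/n}$ without slippage in the constant.
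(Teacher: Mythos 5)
Your proof is correct and takes essentially the same route as the paper: drop the absolute value, apply Lemma~\ref{lem-2} at $p=2$ and Lemma~\ref{lem-3} at $p=2$, and fold the constants $8\left(2-\frac{1}{n}\right)$ into the radical $2\sqrt{2(2n-1)/n}$. The only cosmetic difference is that you parse the left side of Lemma~\ref{lem-2} as $\left({\mathbb E}\left[\max_{1\leq k\leq n}|\zeta_{k}|\right]\right)^{2}$, whereas the paper reads it as ${\mathbb E}\left[\left(\max_{1\leq k\leq n}|\zeta_{k}|\right)^{2}\right]$ (the standard maximal-inequality form) and therefore inserts an explicit moment-inequality step ${\mathbb E}[X]\leq\left({\mathbb E}[X^{2}]\right)^{1/2}$; under either reading your chain of inequalities is valid, since the weaker form follows from the stronger one by Jensen's inequality.
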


\begin{proof}
First note that
$$
{\mathbb E}\left[\max_{1\leq k\leq n}\zeta_{k}\right]
\leq
{\mathbb E}\left[\max_{1\leq k\leq n}|\zeta_{k}|\right]
\leq
\left({\mathbb E}\left[\max_{1\leq k\leq n}|\zeta_{k}|\right]^{2}
\right)^{1/2}.
$$

By applying Lemma~\ref{lem-2} with $ p=2 $, and then Lemma~\ref{lem-3}, we 
get
$$
{\mathbb E}\left[\max_{1\leq k\leq n}|\zeta_{k}|\right]^{2}
\leq
8{\mathbb E}[\zeta_{n}^{2}]
\leq
8\left(2-\frac{1}{n}\right)\sum_{k=1}^{n}{\mathbb E}[\xi_{k}^{2}].
$$
Finally, extracting square root leads us to the desired result.
\end{proof}

Now suppose that $ \xi_{1},\ldots,\xi_{n} $ present independent and
identically distributed (i.i.d.) r.v.'s. With this condition, in particular,
the inequality in Lemma~\ref{lem-4} takes the form
$$
{\mathbb E}\left[\max_{1\leq k\leq n}\zeta_{k}\right]
\le
2\sqrt{2(2n-1){\mathbb E}[\xi_{1}^{2}]}.
$$

The next result obtained in \cite{Gumbel1954Themaxima,Hartley1954Universal} offers an upper bound for the 
expected value of maximum of i.i.d. r.v.'s.
\begin{lemma}\label{lem-5}
If $ {\mathbb E}[\xi_{1}]<\infty $ and $ {\mathbb D}[\xi_{1}]<\infty $, 
then it holds
$$
{\mathbb E}\left[\max_{1\leq k\leq n}\xi_{k}\right]
\le
{\mathbb E}[\xi_{1}]+\frac{n-1}{\sqrt{2n-1}}\sqrt{{\mathbb D}[\xi_{1}]}.
$$
\end{lemma}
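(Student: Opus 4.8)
The plan is to reduce the assertion to a single application of the Cauchy--Schwarz inequality via the distribution function of $\xi_{1}$, after which the constant $\tfrac{n-1}{\sqrt{2n-1}}$ emerges from an elementary integral. Let $F$ denote the common distribution function of the i.i.d. r.v.'s $\xi_{1},\dots,\xi_{n}$, and write $\mu=\mathbb{E}[\xi_{1}]$ and $\sigma^{2}=\mathbb{D}[\xi_{1}]$. Since $\max_{1\le k\le n}\xi_{k}$ has distribution function $F^{n}$, its mean admits the Stieltjes representation
\[
\mathbb{E}\Bigl[\max_{1\le k\le n}\xi_{k}\Bigr]=\int_{-\infty}^{\infty} x\,d\bigl(F(x)^{n}\bigr),
\]
which is finite because $\mu$ and $\sigma^{2}$ are finite. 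Assuming first that $F$ is continuous, one has $d(F^{n})=nF^{n-1}\,dF$, so the representation becomes $n\int x\,F(x)^{n-1}\,dF(x)$.

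First I would center the integrand by subtracting $\mu$. Since $\int_{-\infty}^{\infty}\bigl(nF(x)^{n-1}-1\bigr)\,dF(x)=\bigl[F(x)^{n}-F(x)\bigr]_{-\infty}^{\infty}=0$, the mean of the maximum can be rewritten as
\[
\mathbb{E}\Bigl[\max_{1\le k\le n}\xi_{k}\Bigr]-\mu=\int_{-\infty}^{\infty}(x-\mu)\bigl(nF(x)^{n-1}-1\bigr)\,dF(x).
\]
Next I would apply the Cauchy--Schwarz inequality with respect to the probability measure $dF$, which bounds the right-hand side by the product of $\bigl(\int (x-\mu)^{2}\,dF(x)\bigr)^{1/2}=\sqrt{\mathbb{D}[\xi_{1}]}$ and $\bigl(\int (nF(x)^{n-1}-1)^{2}\,dF(x)\bigr)^{1/2}$. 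The whole point of the centering is that it makes the second factor a pure constant depending only on $n$.

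The remaining step is to evaluate that second factor. Using the substitution $u=F(x)$ and expanding,
\[
\int_{0}^{1}\bigl(nu^{n-1}-1\bigr)^{2}\,du=\frac{n^{2}}{2n-1}-2+1=\frac{(n-1)^{2}}{2n-1},
\]
so the second factor equals $\tfrac{n-1}{\sqrt{2n-1}}$ and the stated bound follows at once. The only genuine obstacle I anticipate is technical rather than conceptual: the relation $d(F^{n})=nF^{n-1}\,dF$, the substitution $u=F(x)$, and the boundary evaluation are all transparent for continuous $F$ but require justification when $F$ has atoms or flat stretches. I would handle this by proving the inequality first for continuous (say, absolutely continuous) $F$, where every step above is exact, and then extending to an arbitrary $F$ with finite variance by approximating it in distribution by continuous distribution functions with convergent means and variances, so that the bound passes to the limit.
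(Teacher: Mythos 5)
The paper itself gives no proof of Lemma~\ref{lem-5}: the result is imported directly from the cited works of Gumbel and of Hartley--David, so there is no internal argument to compare against. Your proposal is correct, and it is in fact precisely the classical proof from those references: write $\mathbb{E}[\max_{k}\xi_{k}]=\int x\,d(F^{n})$, center using $\int(nF^{n-1}-1)\,dF=0$, apply Cauchy--Schwarz with respect to the probability measure $dF$, and evaluate $\int_{0}^{1}(nu^{n-1}-1)^{2}\,du=(n-1)^{2}/(2n-1)$; every step, including the integral computation, checks out. The only place where your write-up is a sketch rather than a proof is the passage from continuous $F$ to general $F$. Your smoothing idea does work (for instance, replace $\xi_{k}$ by $\xi_{k}+\varepsilon U_{k}$ with independent uniform $U_{k}$, which makes the distribution continuous, perturbs the mean and variance by $O(\varepsilon)$, and changes the maximum by at most $\varepsilon$, so the bound survives the limit $\varepsilon\to0$), but it is cleaner to avoid the limiting argument entirely via the quantile transform: writing $\xi_{k}=F^{-1}(U_{k})$ with $U_{1},\ldots,U_{n}$ i.i.d.\ uniform on $(0,1)$ and using monotonicity of $F^{-1}$, one gets $\mathbb{E}[\max_{k}\xi_{k}]=\int_{0}^{1}F^{-1}(u)\,nu^{n-1}\,du$ for arbitrary $F$, after which your centering and Cauchy--Schwarz steps go through verbatim on $[0,1]$ with Lebesgue measure, with $\int_{0}^{1}F^{-1}(u)\,du=\mathbb{E}[\xi_{1}]$ and $\int_{0}^{1}(F^{-1}(u)-\mathbb{E}[\xi_{1}])^{2}\,du=\mathbb{D}[\xi_{1}]$. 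Either route completes the proof in full generality.
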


Assuming $ \xi_{1},\ldots,\xi_{n} $ to be i.i.d. r.v.'s, let us introduce
the notation
$$
\zeta_{lk}=\xi_{l}+\xi_{l+1}+\cdots+\xi_{k}
$$
with $ 1\leq l\leq k\leq n $, and consider the following statement.
\begin{lemma}\label{lem-6}
If $ {\mathbb E}[\xi_{1}]=a\leq0 $, and $ {\mathbb D}[\xi_{1}]<\infty $, 
then it holds
$$
{\mathbb E}\left[\max_{1\leq l\leq k\leq n}\zeta_{lk}\right]
\le
{\mathbb E}[\xi_{1}]+\left(4\sqrt{2(2n-1)}
+\frac{n-1}{\sqrt{2n-1}}\right)\sqrt{{\mathbb D}[\xi_{1}]}.
$$
\end{lemma}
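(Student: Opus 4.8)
The plan is to \emph{center} the summands so as to split off the deterministic mean contribution from a fluctuation term that the preceding lemmas can handle. Set $\eta_i=\xi_i-a$, so that the $\eta_i$ are i.i.d.\ with $\mathbb{E}[\eta_i]=0$ and $\mathbb{E}[\eta_i^2]=\mathbb{D}[\xi_1]$, and write $Z_j=\eta_1+\cdots+\eta_j$ with $Z_0=0$. Then $\zeta_{lk}=\sum_{i=l}^{k}\xi_i=Z_k-Z_{l-1}+(k-l+1)a$. Since $a\le0$ and $k-l+1\ge1$ for every admissible pair, we have $(k-l+1)a\le a$, so $\zeta_{lk}\le a+(Z_k-Z_{l-1})$ termwise. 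Taking the maximum over all $1\le l\le k\le n$ on both sides gives $\max_{l,k}\zeta_{lk}\le a+\max_{1\le l\le k\le n}(Z_k-Z_{l-1})$, and it remains only to bound the expectation of the fluctuation term.

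Next I would decouple the two endpoints of the contiguous sums. For any admissible pair, $Z_k-Z_{l-1}\le\max_{1\le k\le n}Z_k+\max_{1\le l\le n}(-Z_{l-1})$, whence $\max_{1\le l\le k\le n}(Z_k-Z_{l-1})\le\max_{1\le k\le n}Z_k+\max_{0\le j\le n-1}(-Z_j)$. Both terms are maxima of partial sums of i.i.d.\ zero-mean finite-variance variables: the first of the $\eta_i$, the second of the $-\eta_i$, which have the same distribution. By the i.i.d.\ specialization of Lemma~\ref{lem-4} recorded just after its proof, each such partial-sum maximum is at most $2\sqrt{2(2n-1)\mathbb{D}[\xi_1]}$; the boundary value $j=0$ in the backward maximum contributes only $-Z_0=0$ and is absorbed by passing through $\max_{1\le j\le n-1}|Z_j|$, exactly as in the proof of Lemma~\ref{lem-4}.

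Combining the two estimates yields $\mathbb{E}[\max_{1\le l\le k\le n}\zeta_{lk}]\le a+4\sqrt{2(2n-1)}\sqrt{\mathbb{D}[\xi_1]}$, which already implies the asserted inequality since the extra summand $\frac{n-1}{\sqrt{2n-1}}\sqrt{\mathbb{D}[\xi_1]}$ on its right-hand side is nonnegative; equivalently, that last summand can be produced directly by treating the diagonal contribution $\max_{1\le l\le n}\xi_l$ through Lemma~\ref{lem-5}, which is presumably why that lemma was recorded just above. I expect the only real subtlety to be the bookkeeping of the mean: the factor $(k-l+1)a$ must be bounded by $a$ and pulled outside the maximum \emph{before} the endpoints are decoupled. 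If one instead split the uncentered sums $S_k-S_{l-1}$ directly, the lower endpoint would carry a term growing like $(n-1)|a|$, which cannot be controlled; centering first, and only then invoking Lemma~\ref{lem-4} for the forward and backward partial sums, is what keeps the mean contribution down to a single clean $+a$.
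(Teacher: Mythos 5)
Your proof is correct, and while it shares the paper's core strategy --- center the variables and decouple the two endpoints of $\zeta_{lk}$ into a forward and a backward partial-sum maximum, each controlled by Lemma~\ref{lem-4} --- it handles the mean and the boundary term differently, and ends up with a strictly sharper bound. The paper keeps every maximum over nonempty index ranges, as Lemma~\ref{lem-4} formally requires: it bounds $\zeta_{lk}\le\sum_{i=1}^{k}(\xi_{i}-a)+\sum_{i=1}^{l}(-\xi_{i}+a)+\xi_{l}$ (valid because the difference of the two sides is $(l-k)a\geq0$), so the backward sum runs through index $l$ rather than $l-1$, at the price of a leftover diagonal term $\xi_{l}$; the three resulting maxima are then bounded by Lemma~\ref{lem-4}, Lemma~\ref{lem-4}, and Lemma~\ref{lem-5} respectively, and Lemma~\ref{lem-5} is precisely where both ${\mathbb E}[\xi_{1}]$ and the term $\frac{n-1}{\sqrt{2n-1}}\sqrt{{\mathbb D}[\xi_{1}]}$ in the statement come from --- exactly as you guessed. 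You instead extract the mean deterministically via $(k-l+1)a\le a$ and let the backward maximum include the empty sum $Z_{0}=0$, absorbing it through the absolute-value step inside the proof of Lemma~\ref{lem-4}; this eliminates both the diagonal term and Lemma~\ref{lem-5}, and yields ${\mathbb E}[\xi_{1}]+4\sqrt{2(2n-1){\mathbb D}[\xi_{1}]}$, which indeed implies the stated inequality. What each route buys: yours is leaner (one tool instead of two) and gives a better constant; the paper's applies its lemmas verbatim with no boundary-case argument, and its extra term is harmless for the intended use in Theorem~\ref{the-7}, where only the $O(\sqrt{n})$ growth of the error matters. One small wording fix: when justifying the backward maximum you say the $-\eta_{i}$ ``have the same distribution''; note that $-\eta_{i}$ need not be distributed like $\eta_{i}$ unless the law is symmetric, but this is also unnecessary --- all that Lemma~\ref{lem-4} needs is that $-\eta_{1},\dots,-\eta_{n}$ are themselves i.i.d.\ with zero mean and variance ${\mathbb D}[\xi_{1}]$, which holds.
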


\begin{proof}
Simple algebraic manipulations give
\begin{eqnarray*}
\max_{1\leq l\leq k\leq n}\zeta_{lk}
&=&
\max_{1\leq l\leq k\leq n}
\left\{\sum_{i=1}^{k}\xi_{i}+\sum_{i=1}^{l-1}(-\xi_{i})\right\} \\
&\leq&
\max_{1\leq l\leq k\leq n}
\left\{\sum_{i=1}^{k}(\xi_{i}-a)+\sum_{i=1}^{l}(-\xi_{i}+a)+\xi_{l}\right\} \\
&\leq&
\max_{1\leq k\leq n}\sum_{i=1}^{k}(\xi_{i}-a)
+
\max_{1\leq k\leq n}\sum_{i=1}^{k}(-\xi_{i}+a)
+
\max_{1\leq k\leq n}\xi_{k}.
\end{eqnarray*}

Proceeding to expectation, with
$ {\mathbb E}(\xi_{1}-a)^{2}={\mathbb D}[\xi_{1}] $, we have from 
Lemmas~\ref{lem-4} and \ref{lem-5}
\begin{eqnarray*}
{\mathbb E}\left[\max_{1\leq l\leq k\leq n}\zeta_{lk}\right]
&\leq&
4\sqrt{2(2n-1){\mathbb D}[\xi_{1}]}
+{\mathbb E}[\xi_{1}]+\frac{n-1}{\sqrt{2n-1}}\sqrt{{\mathbb D}[\xi_{1}]} \\
&=&
{\mathbb E}[\xi_{1}]+\left(4\sqrt{2(2n-1)}
+\frac{n-1}{\sqrt{2n-1}}\right)\sqrt{{\mathbb D}[\xi_{1}]}.
\qedhere
\end{eqnarray*}
\end{proof}

\section{Exact Evaluation of the Mean Cycle Time}\label{sec-4}

We are now in a position to prove our main result which can be formulated as
follows.
\begin{theorem}\label{the-7}
Suppose that $ \{\tau_{in}|\; n=1,2,\ldots\} $, $ i=0,1,\ldots,M, $ are
mutually independent sequences of i.i.d. r.v.'s with
$ 0\leq{\mathbb E}[\tau_{i1}]<\infty $.

Then the limit at (\ref{equ-3}) exists w.p.1, and if
$ {\mathbb D}[\tau_{i1}]<\infty $, it is given by
\begin{equation}\label{equ-4}
\gamma=\max_{0\leq i\leq M}{\mathbb E}[\tau_{i1}].
\end{equation}
\end{theorem}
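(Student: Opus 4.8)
The plan is to prove existence via Kingman's subadditive ergodic theorem (Theorem~\ref{the-1}) and then pin down the value of $\gamma$ by squeezing $\mathbb{E}[D_{M}(n)]/n$ between matching bounds. First I would introduce, for $l<n$, the family
$$
\zeta_{ln}=\max_{l<k_{1}\leq\cdots\leq k_{M}\leq n}
\left\{\sum_{j=l+1}^{k_{1}}\tau_{0j}+\sum_{j=k_{1}}^{k_{2}}\tau_{1j}
+\cdots+\sum_{j=k_{M}}^{n}\tau_{Mj}\right\},
$$
so that $\zeta_{0n}=D_{M}(n)$ by (\ref{equ-2}). Stationarity is immediate, since the sequences $\{\tau_{in}\}$ are i.i.d.\ in $n$ and mutually independent, so the shift $n\mapsto n+1$ preserves the joint law. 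Boundedness is trivial: every $\tau_{in}\geq0$ forces $\zeta_{0n}\geq0$, while $\zeta_{0n}$ is a maximum over finitely many sums of integrable variables, hence $0\leq\mathbb{E}[\zeta_{0n}]<\infty$. The substantive point is subadditivity, $\zeta_{ln}\leq\zeta_{lk}+\zeta_{kn}$ for $l<k<n$: taking optimal breakpoints $k_{1}\leq\cdots\leq k_{M}$ for $\zeta_{ln}$ and the largest index $i$ with $k_{i}\leq k$ (set $k_{0}=l$), I would split the server-$i$ block of summands at the index $k$, assigning the head together with the preceding blocks to a feasible path for $\zeta_{lk}$ (collapsing the remaining breakpoints to $k$) and the tail together with the following blocks to a feasible path for $\zeta_{kn}$ (collapsing the earlier breakpoints to $k+1$, which is admissible since $k<k_{i+1}$). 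Nonnegativity of the discarded terms then yields the inequality. Kingman's theorem delivers the w.p.1 limit $\gamma=\lim_{n\to\infty}\zeta_{0n}/n$ together with $\gamma=\lim_{n\to\infty}\mathbb{E}[D_{M}(n)]/n$, reducing everything to an expectation computation.

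For the lower bound $\gamma\geq\max_{i}\mathbb{E}[\tau_{i1}]$, I would, for each fixed $i$, feed the single feasible choice $k_{1}=\cdots=k_{i}=1$ and $k_{i+1}=\cdots=k_{M}=n$ into (\ref{equ-2}); discarding the nonnegative boundary terms leaves $D_{M}(n)\geq\sum_{j=1}^{n}\tau_{ij}$, whence $\mathbb{E}[D_{M}(n)]/n\geq\mathbb{E}[\tau_{i1}]$ and, in the limit, $\gamma\geq\mathbb{E}[\tau_{i1}]$. Maximizing over $i$ gives the bound; note that this half uses only the finite-mean hypothesis.

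The matching upper bound is where the variance assumption and Lemma~\ref{lem-6} enter, and it is the main obstacle. The key observation is that \emph{every} path in (\ref{equ-2}) contains exactly $n+M$ summands, since the block lengths sum to $k_{1}+\sum_{i=1}^{M-1}(k_{i+1}-k_{i}+1)+(n-k_{M}+1)=n+M$. Writing $a=\max_{0\leq i\leq M}\mathbb{E}[\tau_{i1}]$ and $\xi_{ij}=\tau_{ij}-a$, subtracting $a$ from each of the $n+M$ terms factors out uniformly, giving
$$
D_{M}(n)=(n+M)\,a+\max_{1\leq k_{1}\leq\cdots\leq k_{M}\leq n}
\left\{\sum_{j=1}^{k_{1}}\xi_{0j}+\cdots+\sum_{j=k_{M}}^{n}\xi_{Mj}\right\}.
$$
Bounding the maximum over joint breakpoints by the sum over servers of the maxima over contiguous ranges,
$$
\max_{1\leq k_{1}\leq\cdots\leq k_{M}\leq n}
\left\{\sum_{j=1}^{k_{1}}\xi_{0j}+\cdots+\sum_{j=k_{M}}^{n}\xi_{Mj}\right\}
\leq
\sum_{i=0}^{M}\max_{1\leq l\leq k\leq n}\sum_{j=l}^{k}\xi_{ij},
$$
I would take expectations and apply Lemma~\ref{lem-6} to each server: since $\mathbb{E}[\xi_{i1}]=\mathbb{E}[\tau_{i1}]-a\leq0$ and $\mathbb{D}[\xi_{i1}]=\mathbb{D}[\tau_{i1}]<\infty$, each term is $O(\sqrt{n})$. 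Summing the $M+1$ contributions gives $\mathbb{E}[D_{M}(n)]\leq(n+M)\,a+O(\sqrt{n})$, so $\mathbb{E}[D_{M}(n)]/n\to a$ from above; combined with the lower bound this yields (\ref{equ-4}). The delicate points I expect to be the careful bookkeeping of the shared block endpoints in the subadditivity splitting, and recognizing the uniform term count that makes the $a$-shift exact and reduces the whole upper bound to $M+1$ applications of Lemma~\ref{lem-6}.
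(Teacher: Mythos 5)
Your proof is correct, and while its existence half coincides with the paper's (the same family $\zeta_{ln}$ from (\ref{equ-5}) fed into Theorem~\ref{the-1}; your splitting of the straddling block with breakpoints collapsed to $k$ and $k+1$ is exactly the argument the paper leaves as ``simple algebraic manipulations''), your computation of the value of $\gamma$ takes a genuinely different route. The paper centers the summands of (\ref{equ-2}) by the \emph{random variables} of the maximizing server $m$: it writes $D_{M}(n)=\sum_{j=1}^{n}\tau_{mj}+\mu$, which forces it to carry the correction terms $\tau_{mk_{1}}+\cdots+\tau_{mk_{M}}$ (each breakpoint index is counted twice along a path), to bound $\mu\geq0$ via a special path, and to bound $\mu$ from above by applying Lemma~\ref{lem-6} to the cross-server differences $\tau_{ij}-\tau_{mj}$ \emph{and} Lemma~\ref{lem-5} to the leftover term $M\max(\tau_{m1},\ldots,\tau_{mn})$. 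You instead center by the \emph{constant} $a=\max_{i}{\mathbb E}[\tau_{i1}]$, using the observation that every feasible path in (\ref{equ-2}) has exactly $n+M$ summands, so the shift factors out exactly; the upper bound then reduces to $M+1$ applications of Lemma~\ref{lem-6} alone, and Lemma~\ref{lem-5} together with all the $\mu$ bookkeeping disappears. Your route buys three things: it is more economical; it applies Lemma~\ref{lem-6} to each sequence $\{\tau_{ij}-a\}_{j}$ separately rather than to differences between servers, which makes completely transparent the paper's closing remark that independence \emph{across} servers (for fixed $n$) is never needed for the value computation; and it avoids the variance inflation ${\mathbb D}(\tau_{i1}-\tau_{m1})={\mathbb D}[\tau_{i1}]+{\mathbb D}[\tau_{m1}]$ in the $O(\sqrt{n})$ constants. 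The paper's decomposition, for its part, yields the slightly sharper-looking two-sided bound $n{\mathbb E}[\tau_{m1}]\leq{\mathbb E}[D_{M}(n)]\leq n{\mathbb E}[\tau_{m1}]+\sum_{i\neq m}{\mathbb E}[\tau_{i1}]+O(\sqrt{n})$, but your sandwich $n\max_{i}{\mathbb E}[\tau_{i1}]\leq{\mathbb E}[D_{M}(n)]\leq(n+M)a+O(\sqrt{n})$ delivers the same $O(n^{-1/2})$ convergence rate, so nothing essential is lost.
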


\begin{proof}
First, we have to verify the existence of the limit at (\ref{equ-3}). In order 
to apply Theorem~\ref{the-1}, let us denote
\begin{equation}\label{equ-5}
\zeta_{ln}=
\max_{l<k_{1}\leq\cdots\leq k_{M}\leq n}
\left\{\sum_{j=l+1}^{k_{1}}\tau_{0j}+\sum_{j=k_{1}}^{k_{2}}\tau_{2j}
+\cdots+\sum_{j=k_{M}}^{n}\tau_{Mj}\right\}
\end{equation}
for each $ l,n $, $ 0\leq l<n $, and note that we can now write
$$
D_{M}(n)=\zeta_{0n}.
$$

With simple algebraic manipulations, it is not difficult to verify that the
family $ \{\zeta_{ln}|\; l<n\} $ defined by (\ref{equ-5}) is subadditive.
Since $ \tau_{i1},\tau_{i2},\ldots $, are i.i.d. r.v.'s for each
$ i=0,1,\ldots,M $, the family also possesses the stationarity property.
Finally, boundedness follows from the condition
$ 0\leq{\mathbb E}[\tau_{i1}]<\infty $ which immediately results in
$ {\mathbb E}[\zeta_{0n}]={\mathbb E}[D_{M}(n)]\geq0 $.

Therefore, one can apply Theorem~\ref{the-1} so as to conclude that the limit 
at (\ref{equ-3}) exists w.p.1, and it can be calculated as
$$
\gamma=\lim_{n\to\infty}\frac{1}{n}{\mathbb E}[D_{M}(n)].
$$

Suppose that the maximum at (\ref{equ-4}) is achieved at some $ i=m $.
Consider the completion time $ D_{M}(n) $ and represent it in the form
$$
D_{M}(n)
= \max_{1\leq k_{1}\leq\cdots\leq k_{M}\leq n}
\left\{\sum_{j=1}^{k_{1}}\tau_{0j}+\sum_{j=k_{1}}^{k_{2}}\tau_{1j}
+\cdots+\sum_{j=k_{M}}^{n}\tau_{Mj}\right\}
= \sum_{j=1}^{n}\tau_{mj}+\mu,
$$
where
\begin{eqnarray}
\mu
&=& 
\max_{1\leq k_{1}\leq\cdots\leq k_{M}\leq n}
\Biggl\{\sum_{j=1}^{k_{1}}(\tau_{0j}-\tau_{mj}) \nonumber \\
& &\mbox{}+\sum_{j=k_{1}}^{k_{2}}(\tau_{1j}-\tau_{mj})
+\cdots+\sum_{j=k_{M}}^{n}(\tau_{Mj}-\tau_{mj}) \nonumber \\
& &\mbox{}+\tau_{mk_{1}}+\cdots+\tau_{mk_{M}}\Biggr\}.\label{equ-6}
\end{eqnarray}

Now we can write
$$
\frac{1}{n}{\mathbb E}[D_{M}(n)]
={\mathbb E}[\tau_{m1}]+\frac{1}{n}{\mathbb E}[\mu].
$$

Let us examine the expected value $ {\mathbb E}[\mu] $. With
$ k_{1}=\cdots=k_{m}=1 $, and $ k_{m+1}=\cdots=k_{M}=n $, we have from
(\ref{equ-6})
$$
\mu\geq
\tau_{01}+\tau_{11}+\cdots+\tau_{m-1,1}+\tau_{m+1,n}+\cdots+\tau_{Mn}\geq0,
$$
and so $ {\mathbb E}[\mu]\geq0 $.

On the other hand, simple algebra gives us an obvious upper bound for
$ \mu $ in the form
\begin{eqnarray*}
\mu
&\leq&
\max_{1\leq k_{1}\leq n}\sum_{j=1}^{k_{1}}(\tau_{0j}-\tau_{mj}) \\
& &\mbox{}+\max_{1\leq k_{1}\leq k_{2}\leq n}
\sum_{j=k_{1}}^{k_{2}}(\tau_{1j}-\tau_{mj})
+\cdots+
\max_{1\leq k_{M}\leq n} 
\sum_{j=k_{M}}^{n}(\tau_{Mj}-\tau_{mj}) \\
& &\mbox{}+
M\max\left(\tau_{m1},\ldots,\tau_{mn}\right).
\end{eqnarray*}

With the condition that $ {\mathbb E}(\tau_{i1}-\tau_{m1})\leq0 $ for
all $ i=0,1,\ldots,M $, one can apply Lemma~\ref{lem-6} to the first 
$ M+1 $ terms on the right-hand side, and then Lemma~\ref{lem-5} to the 
last one so as to get
\begin{eqnarray*}
{\mathbb E}[\mu]
&\leq&
\sum_{i=0}^{M}\left({\mathbb E}(\tau_{i1}-\tau_{m1})
+\left(4\sqrt{2(2n-1)}+\frac{n-1}{\sqrt{2n-1}}\right)
\sqrt{{\mathbb D}(\tau_{i1}-\tau_{m1})}\right) \\
& &\mbox{}+M\left({\mathbb E}[\tau_{m1}]
+\frac{n-1}{\sqrt{2n-1}}\sqrt{{\mathbb D}[\tau_{m1}]}\right) \\
&=&
\sum_{\substack{i=0\\i\ne m}}^M{\mathbb E}[\tau_{i1}]
+
\left(4\sqrt{2(2n-1)}+\frac{n-1}{\sqrt{2n-1}}\right)
\sum_{\substack{i=0\\i\ne m}}^M\sqrt{{\mathbb D}(\tau_{i1}-\tau_{m1})} \\
& &\mbox{}+M\frac{n-1}{\sqrt{2n-1}}\sqrt{{\mathbb D}[\tau_{m1}]},
\end{eqnarray*}
and therefore,
$$
{\mathbb E}[\mu]
\leq\sum_{\substack{i=0\\i\ne m}}^M{\mathbb E}[\tau_{i1}]+O(\sqrt{n}).
$$

Finally, we have the double inequality
$$
{\mathbb E}[\tau_{m1}]\leq\frac{1}{n}{\mathbb E}[D_{M}(n)]
\leq
{\mathbb E}[\tau_{m1}]
+\frac{1}{n}\sum_{\substack{i=0\\i\ne m}}^M{\mathbb E}[\tau_{i1}]
+\frac{O(\sqrt{n})}{n},
$$
and with $ n\rightarrow\infty $, immediately arrive at (\ref{equ-6}).
\end{proof}

\begin{corollary}\label{cor-8}
Under the same conditions as in Theorem~\ref{the-7}, if at least one of the 
expectations $ {\mathbb E}[\tau_{i1}] $, $ i=0,1,\ldots,M $, is positive, 
then it holds
$$
\pi = \left(\max_{0\leq i\leq M}{\mathbb E}[\tau_{i1}]\right)^{-1}.
$$
\end{corollary}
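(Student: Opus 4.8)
The plan is to deduce the statement directly from Theorem~\ref{the-7}, which already delivers the value of the mean cycle time $\gamma$. Since the throughput is defined as the reciprocal $\pi = 1/\gamma$, essentially the whole content of the corollary is the formula $\gamma = \max_{0\leq i\leq M}\mathbb{E}[\tau_{i1}]$ established in the theorem; what remains is merely to certify that this reciprocal is legitimate, i.e. that $\gamma$ is strictly positive and finite so that $1/\gamma$ is a well-defined finite number.

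First I would invoke Theorem~\ref{the-7}: under the stated hypotheses (mutually independent sequences of i.i.d.\ r.v.'s with $0\leq\mathbb{E}[\tau_{i1}]<\infty$ and $\mathbb{D}[\tau_{i1}]<\infty$), the limit defining $\gamma$ exists w.p.1 and equals $\max_{0\leq i\leq M}\mathbb{E}[\tau_{i1}]$, so in particular $\gamma$ is a finite nonnegative constant. Next I would bring in the extra hypothesis of the corollary: because every $\mathbb{E}[\tau_{i1}]$ is nonnegative and at least one of them is strictly positive, their maximum is strictly positive, whence $0<\gamma<\infty$. Combining the definition $\pi = 1/\gamma$ with the formula for $\gamma$ then yields $\pi = \left(\max_{0\leq i\leq M}\mathbb{E}[\tau_{i1}]\right)^{-1}$, which is the assertion.

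The main obstacle here is conceptual rather than computational: the only role of the positivity hypothesis is to guarantee that $\gamma$ does not vanish, thereby legitimizing the passage from $\gamma$ to $\pi = 1/\gamma$. Without it, the maximum could equal zero (forcing every $\tau_{i1}=0$ a.s., a degenerate system), and $\pi$ would be undefined. Once this point is noted, no further estimates are needed and the conclusion is immediate from Theorem~\ref{the-7}.
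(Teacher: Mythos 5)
Your proposal is correct and matches the paper's intent exactly: the paper states this corollary without proof, since it follows immediately from Theorem~\ref{the-7} together with the definition $\pi=1/\gamma$, with the positivity hypothesis serving only to guarantee $\gamma>0$ so the reciprocal is well defined. Nothing further is needed.
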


\section{Tandem Queues with Finite Buffers}\label{sec-5}
In this section, we show how the above approach can be applied to the analysis 
of tandem systems which include queues with finite buffers. Because of limited 
buffer capacity, servers in the systems may be blocked according to one of the 
blocking rules \cite{Krivulin1995Amax-algebra}. Below we present examples of systems with 
manufacturing blocking and communication blocking which are most commonly
encountered in practice.

Let us consider a system which consists of two queues in tandem. Suppose that
the buffer at the first server is infinite, while that at the second server is
finite. The customers arriving to the system have to pass through the queues
consecutively, and then leave the system.

First we suppose that the system operates under the manufacturing blocking
rule. With this type of blocking, if upon completion of a service, the first
server sees the buffer of the second one is full, it cannot be freed and has
to remain busy until the second server completes its current service to
provide a free space in its buffer.

For simplicity, let us assume the finite buffer to have capacity $ 0 $.
With the notations introduced above, one can represent the dynamics of the
system by the equations
\begin{eqnarray}
D_{0}(n) &=& D_{0}(n-1)+\tau_{0n}, \nonumber \\
D_{1}(n) &=& \max(\max(D_{0}(n),D_{1}(n-1))+\tau_{1n},D_{2}(n-1)), 
\label{equ-7} \\
D_{2}(n) &=& \max(D_{1}(n),D_{2}(n-1))+\tau_{2n}, \nonumber
\end{eqnarray} 
for all $ n=1,2,\dots $.

Note that from the second equation, we have $ D_{1}(n)\geq D_{2}(n-1) $, and
therefore, the third equation can be reduced to
$$
D_{2}(n) = D_{1}(n)+\tau_{2n}.
$$

Clearly, under appropriate conditions, both $ {\mathbb E}[D_{1}(n)]/n $ 
and $ {\mathbb E}[D_{2}(n)]/n $ have a common limit $ \gamma $ as 
$ n $ tends to $ \infty $, which can be considered as the mean cycle 
time of the system.

By resolving the recursive equations, we get
$$
D_{1}(n)
=\max_{1\leq k\leq n}\left\{\sum_{j=1}^{k}\tau_{0j}+\tau_{1k}
+\sum_{j=k}^{n-1}\max(\tau_{1,j+1},\tau_{2j})\right\}.
$$

As it is easy to verify, $ D_{1}(n) $ satisfies the double inequality
\begin{equation}\label{equ-8}
L(n)-\max(\tau_{1,n+1},\tau_{2n})
\leq D_{1}(n)
\leq U(n),
\end{equation}
where
\begin{eqnarray*}
L(n)
&=& 
\max_{1\leq k\leq n}\left\{\sum_{j=1}^{k}\tau_{0j}
+\sum_{j=k}^{n}\max(\tau_{1,j+1},\tau_{2j})\right\}, \\
U(n)
&=& 
\max_{1\leq k\leq n}\left\{\sum_{j=1}^{k}\tau_{0j}
+\sum_{j=k}^{n}\max(\tau_{1j},\tau_{2,j-1})\right\}.
\end{eqnarray*}

Taking into account that both $ L(n) $ and $ U(n) $ actually
have the form of (\ref{equ-2}), one can see that, under the same conditions as 
in Theorem~\ref{the-7}, it holds
$$
\lim_{n\to\infty}\frac{1}{n}{\mathbb E}[L(n)]
=
\lim_{n\to\infty}\frac{1}{n}{\mathbb E}[U(n)]
=
\max({\mathbb E}[\tau_{01}],{\mathbb E}\max(\tau_{11},\tau_{21})).
$$

Finally, proceeding to mean value in both sides of (\ref{equ-8}), divided by
$ n $, we conclude that the mean cycle time is given by
$$
\gamma=\max({\mathbb E}[\tau_{01}],{\mathbb E}\max(\tau_{11},\tau_{21})).
$$

Let us now assume the system to follow the communication blocking rule. This
type of blocking requires the first server not to initiate service of a
customer if the buffer of the second server is completed. With the finite
buffer having capacity $ 0 $, the system dynamics is described by the same
recursions as above, except for equation (\ref{equ-7}) which now takes the 
form
$$
D_{1}(n) = \max(D_{0}(n),D_{1}(n-1),D_{2}(n-1))+\tau_{1n}.
$$

Resolving the recursive equations leads us to the expression
$$
D_{2}(n)
=\max_{1\leq k\leq n}\left\{\sum_{j=1}^{k}\tau_{0j}
+\sum_{j=k}^{n}(\tau_{1j}+\tau_{2j})\right\}.
$$

Under the same conditions as in Theorem~\ref{the-7}, we get the mean cycle 
time
$$
\gamma
=\max({\mathbb E}[\tau_{01}],{\mathbb E}[\tau_{11}]+{\mathbb E}[\tau_{21}]).
$$

\section{Concluding Remarks}\label{sec-6}

First note that with similar arguments as used in the proof of 
Theorem~\ref{the-7} and related lemmas, one can verify that the statement of 
the theorem remains valid with the condition 
$ {\mathbb E}[\tau_{i1}^{\alpha}]<\infty $ for some $ \alpha>1 $, 
instead of $ {\mathbb D}[\tau_{i1}]<\infty $.

Furthermore, the proof of the theorem does not actually require that for each
$ n $, r.v.'s $ \tau_{in} $ with $ i=0,1,\ldots,M $, be independent.
This allows one to apply obtained results to tandem queueing systems with
dependence for each customer between his interarrival and service times,
including tandem queues with identical service times at each server.

Theorem~\ref{the-1} actually offers more general existence conditions for the 
mean cycle time, which imply stationarity of the sequence 
$ \{(\tau_{0n},\tau_{1n},\ldots\tau_{Mn})|\; n=1,2,\ldots\} $ in place of
independence conditions in Theorem~\ref{the-7}.

Let us consider recursive equations describing the dynamics of the tandem
queues above, and note that the symbol $ \tau_{in} $ can be thought of as 
the $n^{\text{th}}$ service time at server $ i $ rather than the service 
time of the $n^{\text{th}}$ customer at the server. Since in this case, the 
order in which customers are selected from a queue for service is of no 
concern, the equation also describes the dynamics of systems with any queueing 
disciplines not permitting the preempting of service, and therefore, 
Theorem~\ref{the-7} can trivially be extended to such systems.

Finally note that the proof of the theorem provides us with bounds on
$ {\mathbb E}[D_{M}(n)] $, as well as an upper bound of order 
$ n^{-1/2} $ for the convergence rate of $ {\mathbb E}[D_{M}(n)]/n $ 
to the mean cycle time as $ n $ tends to $ \infty $.

\bibliographystyle{utphys}

\bibliography{On_evaluation_of_the_mean_service_cycle_time_in_tandem_queueing_systems}

\end{document}